\pgfplotsset{compat=1.18}
\theoremstyle{plain} 
\newtheorem{theorem}{Theorem} 
\newtheorem{lemma}[theorem]{Lemma} 
\newtheorem{corollary}[theorem]{Corollary}
\theoremstyle{definition}
\newtheorem*{definition}{Definition}
\DeclareMathOperator{\mre}{Re}
\DeclareMathOperator{\sgn}{sgn}
\DeclareMathOperator{\spam}{span}
\begin{document} 
\title{Norm attaining vectors and Hilbert points} 
\date{\today} 

\author{Konstantinos Bampouras} 
\address{Department of Mathematical Sciences, Norwegian University of Science and Technology (NTNU), 7491 Trondheim, Norway} 
\email{konstantinos.bampouras@ntnu.no}

\author{Ole Fredrik Brevig} 
\address{Department of Mathematics, University of Oslo, 0851 Oslo, Norway} 
\email{obrevig@math.uio.no}

\begin{abstract}
	Let $H$ be a Hilbert space that can be embedded as a dense subspace of a Banach space $X$ such that the norm of the embedding is equal to $1$. We consider the following statements for a nonzero vector $\varphi$ in $H$:
	\begin{enumerate}
		\item[(A)] $\|\varphi\|_X = \|\varphi\|_H$.
		\item[(H)] $\|\varphi+f\|_X \geq \|\varphi\|_X$ for every $f$ in $H$ such that $\langle f, \varphi \rangle =0$.
	\end{enumerate}
	We use duality arguments to establish that (A) $\implies$ (H), before turning our attention to the special case when the Hilbert space in question is the Hardy space $H^2(\mathbb{T}^d)$ and the Banach space is either the Hardy space $H^1(\mathbb{T}^d)$ or the weak product space $H^2(\mathbb{T}^d) \odot H^2(\mathbb{T}^d)$. If $d=1$, then the two Banach spaces are equal and it is known that (H) $\implies$ (A). If $d\geq2$, then the Banach spaces do not coincide and a case study of the polynomials $\varphi_\alpha(z) = z_1^2 + \alpha z_1 z_2 + z_2^2$ for $\alpha\geq0$ illustrates that the statements (A) and (H) for the two Banach spaces describe four distinct sets of functions.
\end{abstract}

\subjclass{Primary 30H10. Secondary 46E22, 47B35.}

\maketitle

\section{Introduction}
The purpose of this paper is to introduce and study an abstract framework containing as special cases the recently investigated concepts of minimal norm Hankel operators \cite{Brevig2022} and Hilbert points \cite{BOS2023,BG2023} in addition to inner functions in Hardy spaces on polydiscs \cite{Rudin1969}. Our starting point reads as follows.

\begin{definition}
	An \emph{admissible pair} $(H,X)$ is a Hilbert space $H$ that can be embedded as a dense subspace of a Banach space $X$ such that the norm of the embedding is $1$. A nonzero vector $\varphi$ in $H$ is called \emph{norm attaining} in $X$ if $\|\varphi\|_X = \|\varphi\|_H$.
\end{definition}

Suppose that $(H,X)$ is an admissible pair and let $X^\ast$ denote the dual space of $X$. Since $H$ is a subspace of $X$ and $\|f\|_X \leq \|f\|_H$ holds for every $f$ in $H$, it is plain that every $\Psi$ in $X^\ast$ defines a bounded linear functional on $H$ and $\|\Psi\|_{H^\ast} \leq \|\Psi\|_{X^\ast}$. It follows from the Riesz representation theorem that there is $\psi$ in $H$ such that
\[\Psi(f) = \langle f, \psi \rangle\]
for every $f$ in $H$. This embeds $X^\ast$ as a subspace of $H$ and we say that a vector $\psi$ in $H$ is in $X^\ast$ when we mean that $\psi$ belongs to this subspace.

\begin{theorem} \label{thm:attain}
	Let $(H,X)$ be an admissible pair and let $\varphi$ be a nonzero vector in $H$. The following are equivalent:
	\begin{enumerate}
		\item[\normalfont (a)] $\varphi$ is norm attaining in $X$.
		\item[\normalfont (b)] $\varphi$ is in $X^\ast$ and $\|\varphi\|_{X^\ast} = \|\varphi\|_H$.
	\end{enumerate}
\end{theorem}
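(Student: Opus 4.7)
My plan is to prove the two implications separately, using a Hahn--Banach/Riesz identification argument in one direction and a self-pairing trick in the other. Throughout, I will use the identification of $X^\ast$ with a subspace of $H$ via $\Psi(f) = \langle f, \psi\rangle$ described in the excerpt, together with the fact that under this identification $\|\psi\|_H = \|\psi\|_{H^\ast} \leq \|\psi\|_{X^\ast}$.

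For (a) $\implies$ (b), assume $\|\varphi\|_X = \|\varphi\|_H$. By Hahn--Banach, pick $\Psi$ in $X^\ast$ with $\|\Psi\|_{X^\ast} = 1$ and $\Psi(\varphi) = \|\varphi\|_X$, and let $\psi$ in $H$ be the representing vector, so that $\langle \varphi, \psi\rangle = \|\varphi\|_X$ and $\|\psi\|_H \leq \|\psi\|_{X^\ast} = 1$. The chain
\[\|\varphi\|_H = \|\varphi\|_X = |\langle \varphi, \psi\rangle| \leq \|\varphi\|_H\,\|\psi\|_H \leq \|\varphi\|_H\]
forces equality in Cauchy--Schwarz, which means $\psi$ is a unit-modulus scalar multiple of $\varphi/\|\varphi\|_H$ (and in particular $\|\psi\|_H = 1$). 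Hence $\varphi$ itself, being a scalar multiple of $\psi$, lies in $X^\ast$, and $\|\varphi\|_{X^\ast} = \|\varphi\|_H\,\|\psi\|_{X^\ast} = \|\varphi\|_H$.

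For (b) $\implies$ (a), assume $\varphi$ is in $X^\ast$ with $\|\varphi\|_{X^\ast} = \|\varphi\|_H$. The functional $f \mapsto \langle f, \varphi\rangle$ is then bounded on $X$ with this norm, so evaluating at $\varphi$ itself gives
\[\|\varphi\|_H^2 = \langle \varphi, \varphi\rangle \leq \|\varphi\|_{X^\ast}\,\|\varphi\|_X = \|\varphi\|_H\,\|\varphi\|_X,\]
which yields $\|\varphi\|_H \leq \|\varphi\|_X$. Combined with the reverse inequality coming from the norm-one embedding $H \hookrightarrow X$, this gives $\|\varphi\|_X = \|\varphi\|_H$.

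I do not anticipate a significant obstacle: both directions are short and formal. The only delicate point is the equality-case argument in Cauchy--Schwarz used to identify $\psi$ as a scalar multiple of $\varphi$ in the forward direction, but this is standard. Everything else is a direct consequence of the norm-one embeddings $H \hookrightarrow X$ and $X^\ast \hookrightarrow H$ together with Hahn--Banach.
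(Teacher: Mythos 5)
Your proof is correct, and the backward direction (b) $\implies$ (a) coincides with the paper's. The forward direction, however, takes a genuinely different and more economical route. Both you and the paper begin by invoking Hahn--Banach to produce $\Psi$ in $X^\ast$ with $\|\Psi\|_{X^\ast}=1$ and $\Psi(\varphi)=\|\varphi\|_X$, represented on $H$ by some $\psi$ with $\|\psi\|_H \leq 1$. You then close the argument in one stroke via the equality case of the Cauchy--Schwarz inequality in the chain $\|\varphi\|_H = \langle \varphi,\psi\rangle \leq \|\varphi\|_H\|\psi\|_H \leq \|\varphi\|_H$, concluding that $\psi$ is a scalar multiple of $\varphi$ (in fact exactly $\varphi/\|\varphi\|_H$, since $\langle\varphi,\psi\rangle>0$), so that $\varphi$ inherits membership in $X^\ast$ and the norm identity directly from $\psi$. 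The paper instead runs a variational argument: it perturbs $\varphi$ by $\alpha g$ for $g$ in $H\cap\ker\psi$ to show that $H\cap\ker\psi$ is orthogonal to $\varphi$, then decomposes an arbitrary $f$ in $H$ along $\ker\psi$ and the span of $\varphi$ to bound $|\langle f,\varphi\rangle|$ by $\|f\|_X\|\varphi\|_H$, finally using density of $H$ in $X$. Your argument is shorter and identifies the extremal functional explicitly; the paper's longer route has the advantage of rehearsing exactly the perturbation-and-decomposition technique that reappears in the proof of Theorem~\ref{thm:hilbertpoints}, where no Cauchy--Schwarz equality case is available. Both are complete proofs.
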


The conditions of Theorem~\ref{thm:attain} capture two (equivalent) ways that the Hilbert space properties of the vector in question are preserved under the embedding in $X$.

\begin{definition}
	Let $(H,X)$ be an admissible pair. A nonzero vector $\varphi$ in $H$ is called a \emph{Hilbert point} in $X$ if
	\[\|\varphi+f\|_X \geq \|\varphi\|_X\]
	holds whenever $f$ is in $H$ and $\langle f, \varphi \rangle =0$.
\end{definition}

The reasoning behind the name is that if $f$ and $\varphi$ are in $H$ and $\langle f, \varphi \rangle = 0$, then
\[\|\varphi + f\|_H = \sqrt{\|\varphi\|_H^2 + \|f\|_H^2} \geq \|\varphi\|_H,\]
by orthogonality. This definition attempts to capture that the geometry of $X$ is locally like the geometry of $H$ near the point $\varphi$. 

\begin{theorem} \label{thm:hilbertpoints}
	Let $(H,X)$ be an admissible pair and let $\varphi$ be a nonzero vector in $H$. The following are equivalent:
	\begin{enumerate}
		\item[\normalfont (c)] $\varphi$ is a Hilbert point in $X$.
		\item[\normalfont (d)] $\varphi$ is in $X^\ast$ and $\|\varphi\|_X \|\varphi\|_{X^\ast} = \|\varphi\|_H^2$.
	\end{enumerate}
\end{theorem}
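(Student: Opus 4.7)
The plan is to recognize condition (c) as an extremal property of $\varphi$ within an affine subspace of $H$, and then use Hahn--Banach duality to compute the minimal $X$-norm on that subspace.

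First, rewrite (c) geometrically. As $f$ runs over $\{f\in H : \langle f,\varphi\rangle = 0\}$, the vector $g = \varphi + f$ runs over the affine subspace
\[
A_\varphi \;:=\; \bigl\{g\in H : \langle g,\varphi\rangle = \|\varphi\|_H^2\bigr\},
\]
so (c) is equivalent to the statement $\|\varphi\|_X = \inf\{\|g\|_X : g \in A_\varphi\}$. Now introduce the linear functional $L_\varphi(g) := \langle g,\varphi\rangle$ on $H$ and let $\kappa \in (0,\infty]$ be its operator norm with respect to $\|\cdot\|_X$. The identification described just before Theorem~\ref{thm:attain}, combined with the density of $H$ in $X$, shows that $\kappa < \infty$ precisely when $\varphi \in X^\ast$, in which case $\kappa = \|\varphi\|_{X^\ast}$. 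A standard rescaling argument then yields the dual formula
\[
\inf\bigl\{\|g\|_X : g\in H,\; L_\varphi(g) = \|\varphi\|_H^2\bigr\} \;=\; \frac{\|\varphi\|_H^2}{\kappa},
\]
with the convention that $\|\varphi\|_H^2/\infty = 0$.

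Combining these steps finishes both directions. If (c) holds, the displayed infimum equals $\|\varphi\|_X$, which is strictly positive since $\varphi \neq 0$ and the embedding $H \hookrightarrow X$ is a norm-$1$ injection; this rules out $\kappa = \infty$, so $\varphi \in X^\ast$ and $\|\varphi\|_X \|\varphi\|_{X^\ast} = \|\varphi\|_H^2$, giving (d). Conversely, if (d) holds, then for every $g \in A_\varphi$ the estimate $\|\varphi\|_H^2 = |\langle g,\varphi\rangle| \leq \|\varphi\|_{X^\ast}\|g\|_X$ yields $\|g\|_X \geq \|\varphi\|_H^2/\|\varphi\|_{X^\ast} = \|\varphi\|_X$, which is (c).

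The main obstacle is the forward implication (c)$\Rightarrow$(d), since the Hilbert point condition is purely geometric and, on its face, says nothing about $\varphi$ lying in $X^\ast$. The dual formula above is exactly what converts the extremal condition in (c) into a statement about the norm of the constraint functional $L_\varphi$, producing both the membership $\varphi \in X^\ast$ and the quantitative identity at the same stroke.
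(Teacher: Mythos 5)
Your proof is correct and follows essentially the same duality argument as the paper: your ``dual formula'' for the infimum of $\|g\|_X$ over the affine hyperplane $A_\varphi$ is exactly the paper's step of decomposing $f\in H$ into its component along $\varphi$ and an orthogonal part, rescaling, and reading off a bound for the functional $g\mapsto\langle g,\varphi\rangle$; the backward direction is identical. The only cosmetic difference is that the paper obtains $\|\varphi\|_H^2\geq\|\varphi\|_X\|\varphi\|_{X^\ast}$ and appeals to the general reverse inequality, whereas your distance formula delivers the equality in one stroke.
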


Since $\|\psi\|_H^2 \leq \|\psi\|_X \|\psi\|_{X^\ast}$ plainly holds for every $\psi$ in $X^\ast$, the condition in Theorem~\ref{thm:hilbertpoints}~(d) reformulates the geometric property of a Hilbert point to a statement about a general estimate that is attained. As a consequence, we have the following.

\begin{corollary} \label{cor:trivial}
	Let $(H,X)$ be an admissible pair. If a nonzero vector $\varphi$ in $H$ is norm attaining in $X$, then $\varphi$ is a Hilbert point in $X$.
\end{corollary}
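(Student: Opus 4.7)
The plan is to deduce this directly by chaining together the two preceding theorems, with no extra analytic work. The hypothesis $\|\varphi\|_X = \|\varphi\|_H$ is precisely condition (a) of Theorem~\ref{thm:attain}. Invoking that theorem, I obtain condition (b): $\varphi$ lies in $X^\ast$ and $\|\varphi\|_{X^\ast} = \|\varphi\|_H$.

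Now I combine the hypothesis $\|\varphi\|_X = \|\varphi\|_H$ with the conclusion $\|\varphi\|_{X^\ast} = \|\varphi\|_H$ just obtained, multiplying the two identities to get
\[
\|\varphi\|_X \, \|\varphi\|_{X^\ast} = \|\varphi\|_H \cdot \|\varphi\|_H = \|\varphi\|_H^2.
\]
Together with the already established membership $\varphi \in X^\ast$, this is exactly condition (d) of Theorem~\ref{thm:hilbertpoints}.

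Finally, applying Theorem~\ref{thm:hilbertpoints} in the reverse direction, (d) $\Longrightarrow$ (c) gives that $\varphi$ is a Hilbert point in $X$, which is the desired conclusion. There is no genuine obstacle here: the corollary is a purely formal consequence of the two theorems and of the trivial arithmetic above. One might note that the implication also aligns with the comment preceding the statement, namely that the general inequality $\|\psi\|_H^2 \leq \|\psi\|_X \|\psi\|_{X^\ast}$ forces both factors to equal $\|\varphi\|_H$ when one of them already does, but the two-line deduction via the cited theorems is the cleanest presentation.
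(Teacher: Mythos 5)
Your proof is correct and follows exactly the route the paper intends: the corollary is stated as an immediate consequence of Theorem~\ref{thm:attain} (giving $\varphi \in X^\ast$ with $\|\varphi\|_{X^\ast} = \|\varphi\|_H$) combined with condition (d) of Theorem~\ref{thm:hilbertpoints}. Nothing is missing.
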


The proofs of Theorem~\ref{thm:attain} and Theorem~\ref{thm:hilbertpoints} are fairly direct consequences of the Hahn--Banach theorem and the Hilbert space structure of $H$. 

We are particularly interested in two classes of admissible pairs. To set the stage for the first class, let $\mathbb{T}$ denote the unit circle in the complex plane. The $d$-fold cartesian product $\mathbb{T}^d = \mathbb{T} \times \mathbb{T} \times \cdots \times \mathbb{T}$ becomes a compact abelian group under coordinate-wise multiplication and its Haar measure coincides with the product measure generated by the normalized Lebesgue arc length measure on $\mathbb{T}$. For $1 \leq p < \infty$, we define the Hardy space $H^p(\mathbb{T}^d)$ as the closure in $L^p(\mathbb{T}^d)$ of the set of polynomials in $d$ complex variables.

The first admissible pair of interest is $(H,X)$ with $H = H^2(\mathbb{T}^d)$ and $X = H^1(\mathbb{T}^d)$. Since a nontrivial function in $H^1(\mathbb{T}^d)$ can only vanish on a set of measure $0$ on $\mathbb{T}^d$ (see e.g.~\cite[Theorem~3.3.5]{Rudin1969}), it follows from the Cauchy--Schwarz inequality that $\varphi$ is norm attaining in $H^1(\mathbb{T}^d)$ if and only if $|\varphi|$ is constant and nonzero almost everywhere on $\mathbb{T}^d$. This is equivalent to the assertion that $\varphi = CI$ for a constant $C\neq0$ and an inner function $I$.

For this admissible pair our definition of Hilbert point is in agreement with the definition of Hilbert points in Hardy spaces from \cite{BOS2023}. Hence Corollary~\ref{cor:trivial} above supplies a simpler proof of the case $p=1$ of \cite[Corollary~2.5]{BOS2023}, which asserts that constant multiplies of inner functions are Hilbert points in $H^1(\mathbb{T}^d)$. The results in \cite{BOS2023} also demonstrate that the converse statement, i.e. that all Hilbert points in $H^1(\mathbb{T}^d)$ are constant multiples of inner functions, is true if and only if $d=1$.

In our second admissible pair of interest, $H$ is a functional Hilbert space \cite[\S36]{Halmos1982} on a nonempty set $\Omega$. We will additionally assume that the constant functions (on $\Omega$) are elements of $H$ and that the multiplier algebra $M(H)$ is dense in $H$. Moreover, we will normalize the norm of $H$ such $\|1\|_H = 1$.

The Banach space $X$ for this admissible pair will be the \emph{weak product space} $H \odot H$ which equals the collection of all functions $f$ on $\Omega$ that enjoy a \emph{weak factorization}
\begin{equation} \label{eq:weakfactorization}
	f = \sum_{j=1}^\infty g_j h_j,
\end{equation}
for sequences $(g_j)_{j\geq1}$ and $(h_j)_{j\geq1}$ in $H$ such that
\begin{equation} \label{eq:weakprodnorm}
	\sum_{j=1}^\infty \|g_j\|_H \|h_j\|_H < \infty.
\end{equation}
The norm of $H\odot H$ is the infimum of \eqref{eq:weakprodnorm} over all possible weak factorizations \eqref{eq:weakfactorization}. We refer to \cite[Theorem~2.1]{AHMR2021} for a proof that $H\odot H$ is a Banach space.

We will say that a given weak factorization \eqref{eq:weakfactorization} is \emph{optimal} should it attain this infimum. The additional assumptions on $H$ ensure that $\|f\|_{H\odot H} \leq \|f\|_H$ for every $f$ in $H$ and that $M(H)$ (and hence $H$) is dense in $H \odot H$, so $(H,H\odot H)$ is an admissible pair. It is plain that a function $\varphi$ is norm attaining in $H \odot H$ if and only if an optimal weak factorization of $\varphi$ is $\varphi = \varphi \cdot 1$.

The assumptions on $H$ also allow us to invoke \cite[Theorem~2.5]{AHMR2021}, which asserts that there is an antilinear isometric isomorphism from the dual space of $H \odot H$ to the space of all bounded Hankel operators on $H$. It follows from this and Theorem~\ref{thm:attain} that if $H = H^2(\mathbb{T}^d)$, then the requirement that an optimal weak factorization of $\varphi$ is $\varphi = \varphi \cdot 1$ coincides with the definition of minimal norm Hankel operators from \cite{Brevig2022}.

This point of view was utilized by Ortega-Cerd\`a and Seip in their counter-example to an infinite-dimensional analogue of Nehari's theorem \cite{OS2012}. Their work implies, and is qualitatively equivalent to, the fact that an optimal weak factorization of $\varphi(z) = z_1+z_2$ in the weak product space $H^2(\mathbb{T}^2) \odot H^2(\mathbb{T}^2)$ is $\varphi = \varphi \cdot 1$.

It is a direct consequence of the well-known inner-outer factorization that $H^1(\mathbb{T}) = H^2(\mathbb{T}) \odot H^2(\mathbb{T})$ as sets and with equality of norms. The inner-outer factorization is also the key ingredient in the proof of \cite[Theorem~1]{Brevig2022}, which asserts that $\|\varphi\|_{(H^2(\mathbb{T}) \odot H^2(\mathbb{T}))^\ast} = \|\varphi\|_{H^2(\mathbb{T})}$ if and only if $\varphi$ is a constant multiple of an inner function. In the present context, this can be more easily seen from Theorem~\ref{thm:attain}.

For $d\geq2$, it is an important open problem in harmonic analysis (see~\cite{HTV21}) whether there is an absolute constant $C_d>0$ such that $\|f\|_{H^1(\mathbb{T}^d)} \geq C_d \|f\|_{W(\mathbb{T}^d)}$ for every $f$ in $W(\mathbb{T}^d) = H^2(\mathbb{T}^d) \odot H^2(\mathbb{T}^d)$.

The work of Ortega-Cerd\`a and Seip discussed above shows that $C_2 \leq 2\sqrt{2}/\pi<1$. A minor improvement can be found in \cite[Theorem~5]{Brevig2022}. Since plainly
\begin{equation} \label{eq:ineqs}
	\|f\|_{H^1(\mathbb{T}^d)} \leq \|f\|_{W(\mathbb{T}^d)} \leq \|f\|_{H^2(\mathbb{T}^d)},
\end{equation}
the open problem is to ascertain whether $H^1(\mathbb{T}^d)$ and $W(\mathbb{T}^d)$ are equal as sets. Note that \eqref{eq:ineqs} also shows that if $\varphi$ is norm attaining in $H^1(\mathbb{T}^d)$, then $\varphi$ is norm attaining in $W(\mathbb{T}^d)$. This inspires us to compare the admissible pairs $(H^2(\mathbb{T}^2),H^1(\mathbb{T}^2))$ and $(H^2(\mathbb{T}^2),W(\mathbb{T}^2))$ in detail. Our case study is concerned with the polynomials
\[\varphi_\alpha(z) = z_1^2 + \alpha z_1 z_2 + z_2^2\]
for $\alpha \geq 0$. In order to state our result, we let $\alpha_0=1.62420\ldots$ denote the unique (see Lemma~\ref{lem:uniquesol}) solution of the equation
\[\sqrt{4-\alpha^2} = \frac{2}{\alpha}\arcsin{\frac{\alpha}{2}}\]
on the interval $(0,2)$.
\begin{theorem} \label{thm:casestudy}
	Suppose that $\varphi_\alpha(z) = z_1^2 + \alpha z_1 z_2 + z_2^2$ for $\alpha\geq0$. Then
	\begin{enumerate}
		\item[\normalfont (i)] $\varphi_\alpha$ is never norm attaining in $H^1(\mathbb{T}^2)$;
		\item[\normalfont (ii)] $\varphi_\alpha$ is a Hilbert point in $H^1(\mathbb{T}^2)$ if and only if $\alpha=0$ or if $\alpha=\alpha_0$;
		\item[\normalfont (iii)] $\varphi_\alpha$ is norm attaining in $W(\mathbb{T}^2)$ if and only if $0 \leq \alpha \leq 1/2$;
		\item[\normalfont (iv)] $\varphi_\alpha$ is a Hilbert point in $W(\mathbb{T}^2)$ if and only if $0 \leq \alpha \leq 1/2$ or if $\alpha=2$.
	\end{enumerate}
\end{theorem}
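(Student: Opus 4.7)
The plan begins with the explicit formula
\[
\varphi_\alpha(e^{i\theta_1},e^{i\theta_2}) = e^{i(\theta_1 + \theta_2)}\bigl(\alpha + 2\cos(\theta_1-\theta_2)\bigr),
\]
which shows that $|\varphi_\alpha|$ depends only on $\theta_1 - \theta_2$ and is nonconstant on $\mathbb{T}^2$. Part (i) is immediate from the characterization of norm attaining in $H^1(\mathbb{T}^2)$ via constancy of modulus noted in the introduction. The formula also yields $\|\varphi_\alpha\|_{H^1(\mathbb{T}^2)}$ in closed form, which will be needed later: for $0 \leq \alpha \leq 2$ it evaluates to an explicit combination of $\sqrt{4-\alpha^2}$ and $\arccos(\alpha/2)$, and equals $\alpha$ for $\alpha \geq 2$.

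For part (iii) I would reduce, via Theorem~\ref{thm:attain} and the Hankel operator identification of $W(\mathbb{T}^2)^\ast$ from \cite{AHMR2021}, to computing the operator norm of the small Hankel operator $\Gamma_{\varphi_\alpha}$. Since $\varphi_\alpha$ is a polynomial of total degree two, this operator has finite rank, with support and image in $\mathrm{span}\{1,z_1,z_2,z_1^2,z_1z_2,z_2^2\}$. Computing the matrix splits it into a $2\times 2$ block on $\mathrm{span}\{z_1,z_2\}$ with eigenvalues $1\pm\alpha$ and a rank-two block on $\mathrm{span}\{1,z_1^2,z_1z_2,z_2^2\}$ with nonzero eigenvalues $\pm\sqrt{2+\alpha^2}$. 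Consequently $\|\varphi_\alpha\|_{W^\ast} = \max(1+\alpha,\sqrt{2+\alpha^2})$, which equals $\sqrt{2+\alpha^2} = \|\varphi_\alpha\|_{H^2(\mathbb{T}^2)}$ precisely when $\alpha \leq 1/2$, establishing (iii).

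For part (iv), Theorem~\ref{thm:hilbertpoints}(d) gives the criterion $\|\varphi_\alpha\|_W \cdot \|\varphi_\alpha\|_{W^\ast} = 2+\alpha^2$. The range $[0,1/2]$ follows from (iii) and Corollary~\ref{cor:trivial}. For $\alpha > 1/2$ the criterion reads $\|\varphi_\alpha\|_W = (2+\alpha^2)/(1+\alpha)$. At $\alpha = 2$ the identity $\varphi_2 = (z_1+z_2)^2$ yields the upper bound $\|\varphi_2\|_W \leq 2$, while the general lower bound $\|\varphi_\alpha\|_W \geq (2+\alpha^2)/\|\varphi_\alpha\|_{W^\ast}$ forces equality since $2 = 6/3$. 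For the remaining values $\alpha \in (1/2,2)\cup(2,\infty)$ I would exhibit a one-parameter family of dual elements (Hankel operators with symbols of the form $z_1^2 + t\, z_1 z_2 + z_2^2$ and appropriate variants) and show that optimizing over $t$ produces a ratio strictly greater than $(2+\alpha^2)/(1+\alpha)$, ruling out the Hilbert point property.

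Finally, part (ii) applies Theorem~\ref{thm:hilbertpoints}(d) with $X = H^1(\mathbb{T}^2)$: the condition becomes $\|\varphi_\alpha\|_{H^1(\mathbb{T}^2)} \cdot \|\varphi_\alpha\|_{(H^1(\mathbb{T}^2))^\ast} = 2+\alpha^2$. With $\|\varphi_\alpha\|_{H^1}$ already in hand, the task reduces to evaluating the dual norm, and this is the main technical obstacle: there is no finite-dimensional algebraic reduction available here, and Nehari's theorem fails on the bidisc. My plan is to exploit the $(\theta_1,\theta_2)\mapsto(\theta_2,\theta_1)$ symmetry of $|\varphi_\alpha|$ to reduce to a one-parameter extremal problem over test functions, identify the extremal explicitly, and derive the transcendental equation defining $\alpha_0$ as the balance point at which the optimizer bifurcates. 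Lemma~\ref{lem:uniquesol} then provides uniqueness of $\alpha_0$ in $(0,2)$.
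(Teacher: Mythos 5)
Your treatment of (i), (iii), and the bulk of (iv) is sound and essentially the paper's own route: (i) is the constant-modulus observation, (iii) is exactly the block decomposition of the Hankel matrix giving $\|\varphi_\alpha\|_{(W(\mathbb{T}^2))^\ast}=\max\bigl(\sqrt{2+\alpha^2},\,1+\alpha\bigr)$ (this is Lemma~\ref{lem:hankelnorm}), and your test functions $z_1^2+t\,z_1z_2+z_2^2$ for the lower bound on $\|\varphi_\alpha\|_{W(\mathbb{T}^2)}$ are precisely the dual elements used in Theorem~\ref{thm:weaknorm}. For (iv) you only need the lower bound $\sup_t\frac{2+\alpha t}{\max(\sqrt{2+t^2},\,1+t)}$ to exceed $\frac{2+\alpha^2}{1+\alpha}$ on $(1/2,2)\cup(2,\infty)$, which does check out (take $t=1/2$ for $\alpha\in(1/2,2)$ and $t\to\infty$ for $\alpha>2$), so your version is in fact slightly lighter than the paper's, which computes $\|\varphi_\alpha\|_{W(\mathbb{T}^2)}$ exactly and therefore also needs the matching upper bound via the homogeneous projection (Lemmas~\ref{lem:mhomproj} and~\ref{lem:mhomdualnorm}). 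Your argument at $\alpha=2$ via $\varphi_2=(z_1+z_2)^2$ is correct.

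The genuine gap is in (ii). You propose to verify $\|\varphi_\alpha\|_{H^1(\mathbb{T}^2)}\,\|\varphi_\alpha\|_{(H^1(\mathbb{T}^2))^\ast}=2+\alpha^2$ by computing the dual norm, i.e.\ by solving $\inf\{\|\psi\|_{L^\infty}:P\psi=\varphi_\alpha\}$, and you yourself flag this as the main obstacle without resolving it: ``reduce to a one-parameter extremal problem, identify the extremal explicitly, and derive the transcendental equation'' is a plan, not an argument, and it is unclear how the bifurcation you describe would produce the specific equation $\frac{\alpha}{2}\sqrt{4-\alpha^2}=\arcsin\frac{\alpha}{2}$. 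The paper avoids computing the dual norm altogether. It uses the characterization (Lemma~\ref{lem:RHP}, from \cite{BOS2023}) that $\varphi$ is a Hilbert point in $H^1(\mathbb{T}^d)$ if and only if $P(\sgn\varphi)=\frac{\|\varphi\|_{H^1}}{\|\varphi\|_{H^2}^2}\varphi$; the point is that the Hahn--Banach/Riesz extremal $\psi$ for a Hilbert point is forced to be a positive multiple of $\sgn\varphi$, so the whole condition collapses to checking proportionality of $P(\sgn\varphi_\alpha)$ to $\varphi_\alpha$. By $2$-homogeneity and the $z_1\leftrightarrow z_2$ symmetry this is the single scalar equation $\alpha\,\widehat{\sgn\varphi_\alpha}(0,2)=\widehat{\sgn\varphi_\alpha}(1,1)$, and both coefficients are elementary one-variable integrals of $\sgn(\alpha+2\cos\vartheta)$; the transcendental equation for $\alpha_0$ falls out immediately. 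You should either adopt this reduction or supply the missing extremal-problem analysis in full; as written, part (ii) is not proved.
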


The main novelty of Theorem~\ref{thm:casestudy} is the assertions (ii) and (iv). The assertion (i) is trivial, since $\varphi_\alpha$ does not have constant modulus on $\mathbb{T}^2$. Taking into account Theorem~\ref{thm:attain}, we note that Theorem~\ref{thm:casestudy}~(iii) is equivalent to \cite[Theorem~10~(a)]{Brevig2022}.

As in the proof of Theorem~\ref{thm:attain} and Theorem~\ref{thm:hilbertpoints}, the main idea in our approach to Theorem~\ref{thm:casestudy} is duality. In the case that $X=H^1(\mathbb{T}^2)$ we will rely on the Riesz representation theorem for $L^1(\mathbb{T}^2)$ and in the case that $X = W(\mathbb{T}^2)$ our arguments will involve Hankel operators on $H^2(\mathbb{T}^2)$.

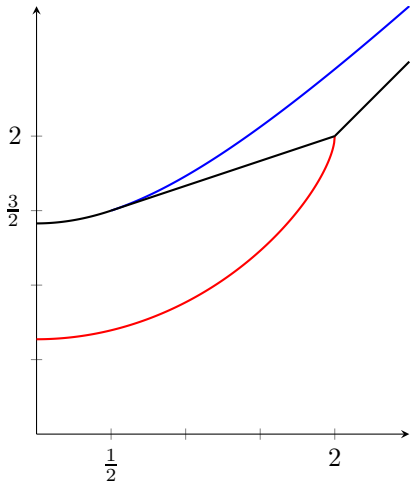
\begin{figure}
	\centering
	\begin{tikzpicture}
		\begin{axis}[
			axis equal image,
			axis lines=middle,
			ymin = 0,
			axis line style=thin,
			xtick={0.5,1,1.5,2},
			xticklabels={$\frac{1}{2}$,,,$2$},
			ytick={0.5,1,1.5,2},
			yticklabels={,,$\frac{3}{2}$,$2$},
			trig format plots=rad,
			]
			\addplot [domain=1/2:2.5, samples=500, color=blue, thick] {sqrt(2+x^2)};
			\addplot [domain=0:2, samples=500, color=red, thick] {(2*x/pi)*asin(x/2)+(2/pi)*sqrt(1-x^2/4)};
			\addplot [domain=0:1/2, samples=500, color=black, thick] {sqrt(2+x^2)};
			\addplot [domain=1/2:2, samples=2, color=black, thick] {(4+x)/3};
			\addplot [domain=2:2.5, samples=2, color=black, thick] {x};
		\end{axis}
	\end{tikzpicture}
	\caption{Norms of $\varphi_\alpha(z) = z_1^2 + \alpha z_1 z_2 + z_2^2$ for $0\leq \alpha \leq 2.5$. From top to bottom: {\color{blue} $H^2(\mathbb{T}^2)$}, $H^2(\mathbb{T}^2) \odot H^2(\mathbb{T}^2) $, and {\color{red} $H^1(\mathbb{T}^2)$}.}
	\label{fig:norms}
\end{figure}

Our efforts towards the proof of Theorem~\ref{thm:casestudy} have two remarkable byproducts. First, we can determine for which $\alpha \geq 0$ either of the general equalities in \eqref{eq:ineqs} are attained. See Figure~\ref{fig:norms}. Second, we are able to find optimal weak factorizations of $\varphi_\alpha$ for every $\alpha\geq0$. We defer the precise statements to Section~\ref{sec:casestudy} below.

Theorem~\ref{thm:casestudy} illustrates in a striking way how norm attaining vectors and Hilbert points for the two Banach spaces $H^1(\mathbb{T}^2)$ and $W(\mathbb{T}^2)$ describe four distinct classes of functions. This stands in stark contrast to the case $d=1$ where the four classes all coincide (with constant multiples of inner functions). It is clear that the inner-outer factorization has a strong impact on the situation in the latter case. 

If the functional Hilbert space $H$ is a normalized complete Pick space, then $H$ and $H \odot H$ enjoy an analogue of the inner-outer factorization (see Theorem~1.4 and Theorem~1.12 in \cite{AHMR2023}). It would be interesting to know what can be said of the norm attaining vectors and Hilbert points in this context.

\subsection*{Organization} The present paper is organized into two further sections. The next section contains the proof of Theorem~\ref{thm:attain} and Theorem~\ref{thm:hilbertpoints}. Section~\ref{sec:casestudy} is devoted to the case study of $\varphi_\alpha$ and culminates with the proof of Theorem~\ref{thm:casestudy}.

\section{Proof of Theorem~\ref{thm:attain} and Theorem~\ref{thm:hilbertpoints}}

\begin{proof}[Proof of Theorem~\ref{thm:attain}]
	We begin with the easiest implication (b) $\implies$ (a). Suppose that $\varphi$ is in $X^\ast$ and that $\|\varphi\|_{X^\ast}=\|\varphi\|_H$. Then 
	\[\|\varphi\|_H = \|\varphi\|_{X^\ast} \geq \frac{|\langle \varphi, \varphi \rangle|}{\|\varphi\|_X} = \frac{\|\varphi\|_H^2}{\|\varphi\|_X},\]
	so $\|\varphi\|_X \geq \|\varphi\|_H$ and, consequently, $\|\varphi\|_X = \|\varphi\|_H$.
	
	For the implication (a) $\implies$ (b), suppose that $\varphi$ is in $H$ and that $\|\varphi\|_X = \|\varphi\|_H$. By the Hahn--Banach theorem, there is some $\psi$ in $X^\ast$ such that $\|\psi\|_{X^\ast} = 1$ and such that $\langle \varphi, \psi \rangle = \|\varphi\|_X$. If $g$ is in $\ker{\psi}$ (i.e. if $g$ is in $X$ and $\langle g, \psi \rangle =0$), then the properties of $\psi$ ensure that $\|\varphi+g\|_X \geq |\langle \varphi + g, \psi \rangle| = \|\varphi\|_X$. This means that if $g$ is in $H \cap \ker{\psi}$, then 
	\[\|\varphi+\alpha g\|_H \geq \|\varphi + \alpha g\|_X \geq \|\varphi\|_X = \|\varphi\|_H\]
	for every complex number $\alpha$. This is equivalent to $2\mre{\left(\alpha \langle g, \varphi\rangle\right)} + |\alpha|^2 \|g\|_H^2 \geq 0$, which holds for all complex numbers $\alpha$ if and only if $\langle g, \varphi \rangle =0$. Every function $f$ in $H$ may be decomposed as
	\[f = \left(f-\frac{\langle f, \psi \rangle}{\|\varphi\|_X} \varphi\right) + \frac{\langle f, \psi \rangle}{\|\varphi\|_X} \varphi\]
	by the assumption that $\varphi$ is in $H$. The first term is in $H \cap \ker{\psi}$ since $\langle \varphi, \psi \rangle = \|\varphi\|_X$, and so it is orthogonal to $\varphi$ by the above. This means that
	\[|\langle f, \varphi \rangle| = \frac{|\langle f, \psi \rangle |}{\|\varphi\|_X} \|\varphi\|_H^2 \leq \|f\|_X \|\varphi\|_H,\]
	where we in the final estimate used that $\|\psi\|_{X^\ast}=1$ and that $\|\varphi\|_X = \|\varphi\|_H$. Since $H$ is dense in $X$, we infer from this that $\varphi$ is in $X^\ast$ and that $\|\varphi\|_{X^\ast} \leq \|\varphi\|_H$.
\end{proof}

\begin{proof}[Proof of Theorem~\ref{thm:hilbertpoints}]
	We begin with the proof that (d) $\implies$ (c). Suppose that $\varphi$ is in $X^\ast$ and that $\|\varphi\|_X \|\varphi\|_{X^\ast} = \|\varphi\|_H^2$. If $f$ is in $H$ and $\langle f, \varphi \rangle = 0$, then 
	\[\|\varphi + f\|_X \geq \frac{|\langle \varphi + f, \varphi \rangle|}{\|\varphi\|_{X^\ast}} = \frac{\|\varphi\|_H^2}{\|\varphi\|_{X^\ast}} = \|\varphi\|_X.\]
	
	For the proof that (c) $\implies$ (d), we suppose that $\varphi$ is a Hilbert point in $X$. Since $\varphi$ is in $H$ by assumption, we can decompose any $f$ in $H$ as
	\[f = \left(f-\frac{\langle f, \varphi \rangle}{\|\varphi\|_H^2} \varphi\right)+\frac{\langle f, \varphi \rangle}{\|\varphi\|_H^2} \varphi.\]
	The first term is orthogonal to $\varphi$ by construction, so the assumption that $\varphi$ is a Hilbert point in $X$ ensures that
	\[\|f\|_X \geq \frac{|\langle f, \varphi \rangle|}{\|\varphi\|_H^2} \|\varphi\|_X.\]
	Since $H$ is dense in $X$, it follows from this that $\|\varphi\|_H^2 \geq \|\varphi\|_X \|\varphi\|_{X^\ast}$.
\end{proof}

\section{A case study} \label{sec:casestudy}
A small amount of preparation is required before we can approach the proof of Theorem~\ref{thm:casestudy}. We begin by recalling that a function $f$ in $L^1(\mathbb{T}^d)$ is uniquely determined by the \emph{Fourier coefficients}
\begin{equation} \label{eq:fouriercoeff}
	\widehat{f}(\kappa) = \int_{[0,2\pi]^d} f(e^{i\theta_1},e^{i\theta_2},\ldots,e^{i\theta_d}) \, e^{-i(\kappa_1 \theta_1+\kappa_2 \theta_2 + \cdots + \kappa_d \theta_d)} \,\frac{d\theta_1}{2\pi}\frac{d\theta_2}{2\pi}\cdots\frac{d\theta_d}{2\pi},
\end{equation}
where the multi-index $\kappa=(\kappa_1,\kappa_2,\ldots,\kappa_d)$ runs over the index set $\mathbb{Z}^d$. In particular, a function $f$ in $L^1(\mathbb{T}^d)$ is in the Hardy space $H^1(\mathbb{T}^d)$ if and only if $\widehat{f}(\kappa)=0$ whenever $\kappa_j < 0$ for at least one $1 \leq j \leq d$. The set $\{z^\kappa\}_{\kappa \in \mathbb{Z}^d}$ forms an orthonormal basis for the Hilbert space $L^2(\mathbb{T}^d)$ and we will call it the \emph{standard} basis. Let also $P$ stand for the orthogonal projection from $L^2(\mathbb{T}^d)$ to $H^2(\mathbb{T}^d)$.

The following result is contained in \cite[Theorem~2.2~(a)]{BOS2023}, but we include a complete account of the  proof to illustrate its interaction with Theorem~\ref{thm:hilbertpoints}. In its statement, we will write $\sgn{z} = \frac{z}{|z|}$ if $z$ is a nonzero complex number and $\sgn{z}=0$ if $z=0$. 

\begin{lemma} \label{lem:RHP}
	A nontrivial function $\varphi$ in $H^2(\mathbb{T}^d)$ is a Hilbert point in $H^1(\mathbb{T}^d)$ if and only if 
	\begin{equation} \label{eq:RHP}
		P(\sgn{\varphi}) =  \frac{\|\varphi\|_{H^1(\mathbb{T}^d)}}{\|\varphi\|_{H^2(\mathbb{T}^d)}^2}\varphi.
	\end{equation}
\end{lemma}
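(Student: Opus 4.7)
The plan is to derive the characterization from Theorem~\ref{thm:hilbertpoints} by giving a concrete description of $(H^1(\mathbb{T}^d))^\ast$ using Hahn--Banach and the Riesz representation theorem for $L^1(\mathbb{T}^d)$. Writing $c = \|\varphi\|_{H^1(\mathbb{T}^d)}/\|\varphi\|_{H^2(\mathbb{T}^d)}^2$, Theorem~\ref{thm:hilbertpoints} recasts the lemma as the assertion that $\varphi$ lies in $(H^1(\mathbb{T}^d))^\ast$ with $\|\varphi\|_{(H^1(\mathbb{T}^d))^\ast}=1/c$ if and only if \eqref{eq:RHP} holds.

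The preparatory step is to observe that every bounded extension of $f\mapsto\langle f,\varphi\rangle$ from polynomials to $H^1(\mathbb{T}^d)$ is, by Hahn--Banach and Riesz, given by integration against some $\sigma\in L^\infty(\mathbb{T}^d)$ of the same norm. Testing the identity on monomials $z^\kappa$ with $\kappa\in\mathbb{Z}_{\geq 0}^d$ and comparing Fourier coefficients via \eqref{eq:fouriercoeff} shows that such a $\sigma$ is admissible precisely when $P(\bar\sigma)=\varphi$. Consequently,
\[\|\varphi\|_{(H^1(\mathbb{T}^d))^\ast} = \min\bigl\{\|\sigma\|_{L^\infty(\mathbb{T}^d)}\,:\,\sigma\in L^\infty(\mathbb{T}^d),\ P(\bar\sigma)=\varphi\bigr\},\]
with the minimum attained.

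The sufficiency of \eqref{eq:RHP} is then immediate: set $\sigma=\overline{\sgn\varphi}/c$. A nontrivial element of $H^1(\mathbb{T}^d)$ is nonzero almost everywhere, so $\|\sigma\|_\infty=1/c$; moreover $P(\bar\sigma)=P(\sgn\varphi)/c=\varphi$ by \eqref{eq:RHP}, yielding $\|\varphi\|_{(H^1)^\ast}\leq 1/c$. Combined with the elementary bound $\|\varphi\|_{H^1}\|\varphi\|_{(H^1)^\ast}\geq|\langle\varphi,\varphi\rangle|=\|\varphi\|_{H^2}^2$, this gives equality, and Theorem~\ref{thm:hilbertpoints} closes the argument.

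For the necessity direction, a norm-attaining $\sigma$ must satisfy $\|\sigma\|_\infty=1/c$ and $P(\bar\sigma)=\varphi$. Parseval gives $\int\varphi\sigma\,dm=\langle\varphi,P(\bar\sigma)\rangle=\|\varphi\|_{H^2}^2=\|\varphi\|_{H^1}/c$, which pins the chain
\[\frac{\|\varphi\|_{H^1}}{c}=\int\varphi\sigma\,dm\leq\int|\varphi||\sigma|\,dm\leq\|\sigma\|_\infty\|\varphi\|_{H^1}=\frac{\|\varphi\|_{H^1}}{c}\]
into equalities throughout. Equality in the first inequality forces $\varphi\sigma\geq 0$ a.e., and in the second it forces $|\sigma|=1/c$ a.e.\ on $\{\varphi\neq 0\}$, which has full measure. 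Together these pin down $\sigma=\overline{\sgn\varphi}/c$, and $P(\bar\sigma)=\varphi$ becomes \eqref{eq:RHP}. The main obstacle is this rigidity step in the last display; the remainder is a mechanical translation between functionals on $H^1(\mathbb{T}^d)$ and their Fourier representatives.
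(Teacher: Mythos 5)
Your proof is correct and follows essentially the same route as the paper: translate the Hilbert point condition via Theorem~\ref{thm:hilbertpoints} into a statement about $\|\varphi\|_{(H^1(\mathbb{T}^d))^\ast}$, realize that dual norm through Hahn--Banach and the Riesz representation theorem as a minimal $L^\infty$ representative $\sigma$ with $P(\bar\sigma)=\varphi$, and use the equality case of H\"older together with the almost-everywhere nonvanishing of $\varphi$ to force $\sigma$ to be a multiple of $\overline{\sgn\varphi}$. The only cosmetic difference is that the paper proves the sufficiency direction directly by testing $\varphi+f$ against $\sgn\varphi$ rather than via the dual-norm computation; both arguments are sound.
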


\begin{proof}
	Suppose that \eqref{eq:RHP} holds. If $f$ is in $H^2(\mathbb{T}^d)$ and $\langle f, \varphi \rangle = 0$, then $\langle f, \sgn{\varphi} \rangle = 0$. Consequently,
	\[\|\varphi\|_{H^1(\mathbb{T}^d)} = \langle \varphi, \sgn{\varphi} \rangle = \langle \varphi+f, \sgn{\varphi} \rangle \leq \|\varphi+f\|_{H^1(\mathbb{T}^d)},\]
	which demonstrates that $\varphi$ is a Hilbert point in $H^1(\mathbb{T}^d)$.
	
	Suppose that $\varphi$ is a Hilbert point in $H^1(\mathbb{T}^d)$. By Theorem~\ref{thm:hilbertpoints}, we know that $\varphi$ is in the dual space of $H^1(\mathbb{T}^d)$. If we consider $H^1(\mathbb{T}^d)$ as a subspace of $L^1(\mathbb{T}^d)$, then it follows from the Hahn--Banach theorem and the Riesz representation theorem for $L^1(\mathbb{T}^d)$ that there is a function $\psi$ in $L^\infty(\mathbb{T}^d)$ such that $P\psi = \varphi$ and such that $\|\psi\|_{L^\infty(\mathbb{T}^d)} = \|\varphi\|_{(H^1(\mathbb{T}^d))^\ast}$. When combined with Theorem~\ref{thm:hilbertpoints}, this shows that
	\begin{equation} \label{eq:insertconstant}
		\|\psi\|_{L^\infty(\mathbb{T}^d)} = \|\varphi\|_{(H^1(\mathbb{T}^d))^\ast} = \frac{\langle \varphi, \varphi \rangle}{\|\varphi\|_{H^1(\mathbb{T}^d)}} = \frac{\langle \varphi, \psi \rangle}{\|\varphi\|_{H^1(\mathbb{T}^d)}}.
	\end{equation}
	Since $\varphi$ is a nontrivial function in $H^1(\mathbb{T}^d)$ by assumption, it can only vanish on a set of measure $0$ on $\mathbb{T}^d$ (see e.g.~\cite[Theorem~3.3.5]{Rudin1969}). Hence it follows from \eqref{eq:insertconstant} that $\varphi \overline{\psi} = |\varphi|>0$ almost everywhere on $\mathbb{T}^d$, and so there is a positive constant $C$ such that $\psi = C\sgn{\varphi}$ almost everywhere on $\mathbb{T}^d$. The constant is determined by \eqref{eq:insertconstant}.
\end{proof}

\begin{lemma} \label{lem:uniquesol}
	If $0\leq \alpha \leq 2$ and 
	\[\frac{\alpha}{2} \sqrt{4-\alpha^2} = \arcsin{\frac{\alpha}{2}},\]
	then $\alpha=0$ or $\alpha=1.62420\ldots$.
\end{lemma}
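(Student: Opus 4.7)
The plan is to reduce the problem to locating the zeros of a single real function on $[0,2]$ via an elementary monotonicity argument. Define
\[g(\alpha) := \frac{\alpha}{2}\sqrt{4-\alpha^2} - \arcsin\frac{\alpha}{2}, \qquad \alpha \in [0,2].\]
The first step is simply to note $g(0) = 0$, which accounts for one of the two advertised solutions. The remaining task is to prove that $g$ has exactly one additional zero in $(0,2)$, which will be $\alpha_0$.

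Next, I would carry out a direct computation of $g'(\alpha)$. The product rule gives
\[\frac{d}{d\alpha}\!\left[\frac{\alpha}{2}\sqrt{4-\alpha^2}\right] = \frac{2-\alpha^2}{\sqrt{4-\alpha^2}},\]
and since $\frac{d}{d\alpha}\arcsin(\alpha/2) = 1/\sqrt{4-\alpha^2}$, the two singular factors cancel cleanly to leave
\[g'(\alpha) = \frac{1-\alpha^2}{\sqrt{4-\alpha^2}}.\]
Thus $g'>0$ on $(0,1)$ and $g'<0$ on $(1,2)$, so $g$ is strictly increasing on $[0,1]$ and strictly decreasing on $[1,2]$, with a unique interior maximum at $\alpha=1$.

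From these monotonicity facts the conclusion follows quickly. Since $g(0)=0$ and $g$ is strictly increasing on $[0,1]$, we have $g(\alpha) > 0$ for all $\alpha \in (0,1]$, so no solution lies in that interval. On $[1,2]$, $g$ decreases from $g(1) = \tfrac{\sqrt{3}}{2} - \tfrac{\pi}{6} > 0$ to $g(2) = -\tfrac{\pi}{2} < 0$, so the intermediate value theorem provides exactly one $\alpha_0 \in (1,2)$ with $g(\alpha_0)=0$. A numerical evaluation (e.g. a few Newton iterations starting from the midpoint) yields $\alpha_0 = 1.62420\ldots$.

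There is no real obstacle: the entire argument rests on the pleasant cancellation in $g'$ that makes the sign of the derivative transparent. The only mild care is in checking the positivity of $g(1)$, but $\tfrac{\sqrt{3}}{2} \approx 0.866$ clearly exceeds $\tfrac{\pi}{6} \approx 0.524$, so this is immediate.
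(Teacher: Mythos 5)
Your proof is correct: the derivative computation checks out ($\frac{d}{d\alpha}[\frac{\alpha}{2}\sqrt{4-\alpha^2}] = \frac{2-\alpha^2}{\sqrt{4-\alpha^2}}$ and $\frac{d}{d\alpha}\arcsin(\alpha/2) = \frac{1}{\sqrt{4-\alpha^2}}$ indeed combine to give $g'(\alpha) = \frac{1-\alpha^2}{\sqrt{4-\alpha^2}}$), and the sign analysis together with $g(1) = \frac{\sqrt{3}}{2} - \frac{\pi}{6} > 0$ and $g(2) = -\frac{\pi}{2} < 0$ cleanly yields exactly one zero in $(0,2)$. The paper takes a different, derivative-free route: for $\alpha > 0$ it divides the equation through by $\alpha/2$ to obtain $\sqrt{4-\alpha^2} = \frac{2}{\alpha}\arcsin\frac{\alpha}{2}$, observes that the left-hand side decreases from $2$ to $0$ while the right-hand side increases from $1$ to $\pi/2$ (using that $x \mapsto x/\sin x$ is increasing on $[0,\pi/2]$), and concludes uniqueness from the single crossing of a decreasing and an increasing function. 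The paper's rearrangement is slightly slicker in that it avoids differentiation and the separate treatment of the two subintervals $[0,1]$ and $[1,2]$; your version is more mechanical but has the advantage of working directly with the difference function and localizing the root in $(1,2)$ rather than merely in $(0,2)$, which makes the subsequent numerical estimate better anchored. Both arguments are complete and elementary, so this is a matter of taste.
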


\begin{proof}
	It is plain that the equation holds for $\alpha=0$. If $\alpha>0$, then we rewrite the equation as
	\[\sqrt{4-\alpha^2} = \frac{2}{\alpha} \arcsin{\frac{\alpha}{2}}.\]
	The left-hand side decreases from $2$ to $0$, while the right-hand side increases (because $x \mapsto x/\sin{x}$ is increasing on $[0,\pi/2]$) from $1$ to $\pi/2$. It follows that there is a unique solution $0<\alpha<2$, which can easily be estimated.
\end{proof}

Let $m$ be an integer. A function $f$ in $L^1(\mathbb{T}^d)$ is called \emph{$m$-homogeneous} if the identity
\[f(e^{i\vartheta}z_1,e^{i\vartheta}z_2,\ldots,e^{i\vartheta}z_d) = e^{im\vartheta} f(z_1,z_2,\ldots,z_d)\]
holds for almost every $z$ on $\mathbb{T}^d$. It follows from \eqref{eq:fouriercoeff} that $f$ is $m$-homogeneous if and only if $\widehat{f}(\kappa) = 0$ whenever $\kappa_1+\kappa_2+\cdots+\kappa_d \neq m$. Consequently, the Hardy space $H^1(\mathbb{T}^d)$ only contains nontrivial $m$-homogeneous functions with $m\geq0$ and they are all polynomials. The following result corresponds to the statement (ii) in Theorem~\ref{thm:casestudy}.

\begin{theorem} \label{thm:H1casestudy}
	If $\varphi_\alpha(z) = z_1^2 + \alpha z_1 z_2 + z_2^2$ for $\alpha\geq0$, then $\varphi_\alpha$ is a Hilbert point in $H^1(\mathbb{T}^2)$ if and only if $\alpha=0$ or $\alpha = 1.62420\ldots$.
\end{theorem}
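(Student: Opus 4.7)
The plan is to apply Lemma~\ref{lem:RHP}, which reduces the Hilbert point condition for $\varphi_\alpha$ to the single identity $P(\sgn \varphi_\alpha) = \lambda\,\varphi_\alpha$ for some (necessarily positive) scalar $\lambda$. The first step exploits the explicit factorization available on $\mathbb{T}^2$: writing $z_j = e^{i\theta_j}$, we have
\[\varphi_\alpha(z) = z_1 z_2\bigl(\alpha + 2\cos(\theta_1-\theta_2)\bigr),\]
because $z_1/z_2 + z_2/z_1 = 2\cos(\theta_1 - \theta_2)$. Consequently $\sgn \varphi_\alpha(z) = z_1 z_2 \cdot h_\alpha(\theta_1 - \theta_2)$ almost everywhere, where $h_\alpha(\phi) := \sgn(\alpha + 2\cos\phi)$ is a bounded real and even function on $[-\pi,\pi]$.

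Next, I would expand $h_\alpha$ in a Fourier series, $h_\alpha(\phi) = \sum_{k\in\mathbb{Z}} c_k e^{ik\phi}$, with coefficients $c_k = c_{-k} \in \mathbb{R}$, so that
\[\sgn \varphi_\alpha = \sum_{k\in\mathbb{Z}} c_k\,z_1^{k+1} z_2^{1-k}.\]
Only the three modes $k \in \{-1,0,1\}$ have both exponents nonnegative, hence
\[P(\sgn \varphi_\alpha) = c_1\bigl(z_1^2 + z_2^2\bigr) + c_0\,z_1 z_2.\]
Since this already has the structural form of $\varphi_\alpha = z_1^2 + \alpha z_1 z_2 + z_2^2$, the identity $P(\sgn\varphi_\alpha) = \lambda \varphi_\alpha$ is equivalent to the pair of conditions $c_1 \neq 0$ and $c_0 = \alpha c_1$. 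The particular value of $\lambda$ prescribed by Lemma~\ref{lem:RHP} is then automatic, as it follows by pairing both sides of the identity with $\varphi_\alpha$ and comparing $\|\varphi_\alpha\|_{H^1(\mathbb{T}^2)}$ with $\|\varphi_\alpha\|_{H^2(\mathbb{T}^2)}^2 = 2+\alpha^2$.

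The final step is to evaluate $c_0$ and $c_1$ and split into cases. For $0 \leq \alpha < 2$, I set $\phi_0 = \arccos(-\alpha/2) \in [\pi/2, \pi]$, so that $h_\alpha = 1$ on $(-\phi_0, \phi_0)$ and $h_\alpha = -1$ on the complement. Elementary integration yields
\[c_0 = \frac{2\phi_0 - \pi}{\pi}, \qquad c_1 = \frac{2\sin\phi_0}{\pi} = \frac{\sqrt{4-\alpha^2}}{\pi},\]
and the condition $c_0 = \alpha c_1$ becomes $2\phi_0 - \pi = \alpha\sqrt{4-\alpha^2}$. Writing $\phi_0 = \pi/2 + \arcsin(\alpha/2)$, this is precisely the equation of Lemma~\ref{lem:uniquesol}, whose only solutions in $[0,2)$ are $\alpha=0$ and $\alpha=\alpha_0$. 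For $\alpha \geq 2$, the factor $\alpha + 2\cos\phi$ is nonnegative, so $\sgn\varphi_\alpha = z_1 z_2$ almost everywhere and $P(\sgn\varphi_\alpha) = z_1 z_2$, which is manifestly not a scalar multiple of $\varphi_\alpha$. I do not foresee a serious obstacle here; the argument is essentially a careful one-dimensional Fourier calculation made possible by the $2$-homogeneity of $\varphi_\alpha$.
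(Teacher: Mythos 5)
Your argument is correct and follows essentially the same route as the paper: both apply Lemma~\ref{lem:RHP}, use the $2$-homogeneity of $\varphi_\alpha$ to reduce $P(\sgn{\varphi_\alpha})$ to a one-variable Fourier computation of the coefficients of $z_1^2+z_2^2$ and $z_1z_2$, and arrive at the equation of Lemma~\ref{lem:uniquesol}. (In fact your value $c_1 = \sqrt{4-\alpha^2}/\pi$ is the correct one --- the paper's displayed $\widehat{\sgn{\varphi_\alpha}}(0,2)=\frac{1}{2\pi}\sqrt{4-\alpha^2}$ is off by a factor of $2$, as the $\alpha=0$ check $\int_0^{2\pi}|\cos\vartheta|\,\frac{d\vartheta}{2\pi}=\frac{2}{\pi}$ shows, and only your value makes \eqref{eq:solveme} reduce exactly to the equation of Lemma~\ref{lem:uniquesol}.)
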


\begin{proof}
	We will use Lemma~\ref{lem:RHP}. Since $\varphi_\alpha$ is $2$-homogeneous, it is plain that $\sgn{\varphi_\alpha}$ is also $2$-homogeneous. Consequently, it follows that
	\[P(\sgn{\varphi_\alpha}) = az_1^2 + b z_1 z_2 + c z_2^2.\]
	Since $\varphi_\alpha(z_2,z_1) = \varphi_\alpha(z_1,z_2)$, we must have $a=c$. Hence Lemma~\ref{lem:RHP} implies that $\varphi_\alpha$ is a Hilbert point in $H^1(\mathbb{T}^2)$ if and only if
	\begin{equation} \label{eq:solveme}
		\alpha \,\widehat{\sgn{\varphi_\alpha}}(0,2) = \widehat{\sgn{\varphi_\alpha}}(1,1).
	\end{equation}
	We begin with the latter Fourier coefficient, which is slightly simpler to compute. Here we have
	\[\left(\sgn{\varphi_\alpha}(e^{i\theta_1},e^{i\theta_2})\right)\, e^{-i(\theta_1+\theta_2)} = \sgn{\left(\alpha+2\cos(\theta_1-\theta_2)\right)},\]
	which means that
	\[\widehat{\sgn{\varphi_\alpha}}(1,1) = \int_0^{2\pi} \sgn{\left(\alpha+2\cos{\vartheta}\right)}\,\frac{d\vartheta}{2\pi} = \begin{cases}
		\frac{2}{\pi} \arcsin{\frac{\alpha}{2}}, & \text{if } 0 \leq \alpha \leq 2; \\
		1, & \text{if } \alpha>2.
	\end{cases}\]
	For the former Fourier coefficient, we have
	\[\left(\sgn{\varphi_\alpha}(e^{i\theta_1},e^{i\theta_2})\right)\, e^{-2i\theta_2} = e^{i(\theta_1-\theta_2)}\sgn{\left(\alpha+2\cos(\theta_1-\theta_2)\right)}\]
	which yields
	\[\widehat{\sgn{\varphi_\alpha}}(0,2) = \int_0^{2\pi} e^{i\vartheta}\,\sgn{\left(\alpha+2\cos{\vartheta}\right)}\,\frac{d\vartheta}{2\pi} = \begin{cases}
		\frac{1}{2\pi} \sqrt{4-\alpha^2}, & \text{if } 0 \leq \alpha \leq 2; \\
		0, & \text{if } \alpha>2.
	\end{cases}\]
	We insert these formulas into the equation \eqref{eq:solveme}. There are plainly no solutions if $\alpha>2$. If $0\leq \alpha \leq 2$, then we get precisely the equation considered in Lemma~\ref{lem:uniquesol}.
\end{proof}

Before we proceed to second part of our case study, let us compute
\[\|\varphi_\alpha\|_{H^1(\mathbb{T}^2)} = \int_0^{2\pi} |\alpha+2\cos{\vartheta}|\,\frac{d\vartheta}{2\pi} = \begin{cases}
	\frac{2}{\pi}\Big(\alpha \arcsin{\frac{\alpha}{2}} + \sqrt{4-\alpha^2}\,\Big), & \text{if } 0 \leq \alpha \leq 2; \\
	\alpha, & \text{if } \alpha > 2.
\end{cases}\]
This computation and Theorem~\ref{thm:weaknorm} below forms the basis for Figure~\ref{fig:norms}.

Let $m$ be an integer and let $P_m$ denote the orthogonal projection from $L^2(\mathbb{T}^d)$ to its subspace of $m$-homogeneous functions. By orthogonality, every $f$ in $H^2(\mathbb{T}^d)$ satisfies the equation
\begin{equation} \label{eq:odecomp}
	\|f\|_{H^2(\mathbb{T}^d)}^2 = \sum_{m=0}^\infty \|P_m f\|_{H^2(\mathbb{T}^d)}^2.
\end{equation}
It is clear that $P_m$ is densely defined on the weak product space $W(\mathbb{T}^d)$. We next show that it extends to a norm $1$ operator on $W(\mathbb{T}^d)$ and, consequently, on its dual space. This result (in a slightly different context) can be found in \cite[Theorem~5]{BP2016}. In order to make the present paper self-contained, we repeat the proof.

\begin{lemma} \label{lem:mhomproj}
	If $m$ is nonnegative integer, then $P_m$ extends to a norm $1$ operator on $W(\mathbb{T}^d)$ and on $(W(\mathbb{T}^d))^\ast$.
\end{lemma}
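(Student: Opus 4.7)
The plan is to prove that $P_m$ contracts $W(\mathbb{T}^d)$ directly from a weak factorization, and then transfer the bound to the dual by transposition. Given an arbitrary weak factorization $f = \sum_j g_j h_j$, I would decompose each factor into its homogeneous components via \eqref{eq:odecomp}, writing $g_j = \sum_k P_k g_j$ and $h_j = \sum_\ell P_\ell h_j$. Since $(P_k g_j)(P_\ell h_j)$ is $(k+\ell)$-homogeneous, applying $P_m$ to $g_j h_j$ retains only terms with $k+\ell = m$, so
\[P_m f = \sum_{j} \sum_{k+\ell = m,\, k,\ell \geq 0} (P_k g_j)(P_\ell h_j),\]
which is itself a weak factorization of $P_m f$. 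Applying the Cauchy--Schwarz inequality to the inner (finite) sum and invoking \eqref{eq:odecomp} for each of $g_j$ and $h_j$ yields
\[\sum_{k+\ell=m} \|P_k g_j\|_{H^2(\mathbb{T}^d)} \|P_\ell h_j\|_{H^2(\mathbb{T}^d)} \leq \|g_j\|_{H^2(\mathbb{T}^d)} \|h_j\|_{H^2(\mathbb{T}^d)}.\]
Summing over $j$ and taking the infimum over factorizations gives $\|P_m f\|_{W(\mathbb{T}^d)} \leq \|f\|_{W(\mathbb{T}^d)}$, so $P_m$ extends by density from $H^2(\mathbb{T}^d)$ to a contraction on $W(\mathbb{T}^d)$.

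For $(W(\mathbb{T}^d))^\ast$, I would define the extension by transposition, setting $(P_m \psi)(f) := \psi(P_m f)$; the contractivity on $W(\mathbb{T}^d)$ then immediately forces $\|P_m \psi\|_{(W(\mathbb{T}^d))^\ast} \leq \|\psi\|_{(W(\mathbb{T}^d))^\ast}$. Self-adjointness of $P_m$ on $L^2(\mathbb{T}^d)$ guarantees that this extension agrees with the $L^2$-projection on the dense subspace $H^2(\mathbb{T}^d) \cap (W(\mathbb{T}^d))^\ast$. To upgrade the inequality $\leq 1$ to equality in both norms, I would test on the monomial $z_1^m$: since $|z_1^m| \equiv 1$ on $\mathbb{T}^d$, it is norm attaining in $H^1(\mathbb{T}^d)$, and by \eqref{eq:ineqs} also $\|z_1^m\|_{W(\mathbb{T}^d)} = 1$; Theorem~\ref{thm:attain} applied to the admissible pair $(H^2(\mathbb{T}^d), W(\mathbb{T}^d))$ then gives $\|z_1^m\|_{(W(\mathbb{T}^d))^\ast} = 1$ as well. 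Since $P_m z_1^m = z_1^m$, both operator norms are attained.

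The only step requiring genuine care is the Cauchy--Schwarz estimate in the first paragraph: one must split the inner sum into two factors before applying \eqref{eq:odecomp} separately to each, so that the square roots collapse to $\|g_j\|_{H^2(\mathbb{T}^d)} \|h_j\|_{H^2(\mathbb{T}^d)}$. The remaining steps are soft --- density, transposition, and the standard computation for $z_1^m$ --- and I do not anticipate any difficulty there.
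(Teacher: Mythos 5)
Your proposal is correct and follows essentially the same route as the paper: the identity $P_m(g_jh_j)=\sum_{k+\ell=m}(P_kg_j)(P_\ell h_j)$, Cauchy--Schwarz on the inner sum combined with \eqref{eq:odecomp}, duality via self-adjointness of $P_m$, and the test function $z_1^m$ for sharpness. The only difference is that you spell out the lower bound for the dual norm via Theorem~\ref{thm:attain}, which the paper leaves implicit.
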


\begin{proof}
	The first assertion implies the other by duality since $P_m$ is self-adjoint in the pairing of $H^2(\mathbb{T}^d)$. The function $f(z)=z_1^m$ shows that $\|P_m\|_{W(\mathbb{T}^d) \to W(\mathbb{T}^d)} \geq 1$. Let $f$ be a function in $W(\mathbb{T}^d)$ and let $f = \sum_{j \geq 1} g_j h_j$
	be a weak factorization of $f$. Then
	\[P_m f = \sum_{j=1}^\infty \sum_{n=0}^m P_n g_j P_{m-n} h_j,\]
	and, consequently,
	\[\|P_m f\|_{W(\mathbb{T}^d)} \leq \sum_{j=1}^\infty \sum_{n=0}^m \|P_n g_j\|_{H^2(\mathbb{T}^d)} \|P_{m-n} h_j\|_{H^2(\mathbb{T}^d)} \leq \sum_{j=1}^\infty \|g_j\|_{H^2(\mathbb{T}^d)} \|h_j\|_{H^2(\mathbb{T}^d)}\]
	where we used the Cauchy--Schwarz inequality in the inner sum and \eqref{eq:odecomp} twice.
\end{proof}

\begin{lemma} \label{lem:mhomdualnorm}
	Let $m$ be a nonnegative integer. If $\varphi$ is a nontrivial $m$-homogeneous polynomial, then there is a $m$-homogeneous polynomial $\psi$ such that
	\begin{equation} \label{eq:mhomdualnorm}
		\|\varphi\|_{W(\mathbb{T}^d)} = \frac{\langle \psi, \varphi \rangle}{\|\psi\|_{(W(\mathbb{T}^d))^\ast}}.
	\end{equation}
\end{lemma}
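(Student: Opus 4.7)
My approach is to combine the Hahn--Banach theorem with Lemma~\ref{lem:mhomproj}. The idea is simple: Hahn--Banach produces a norming functional for $\varphi$ in $(W(\mathbb{T}^d))^\ast$, and then projecting onto the $m$-homogeneous part via $P_m$ should preserve both the pairing with $\varphi$ (since $\varphi$ is itself $m$-homogeneous) and the dual norm (up to the cost-free bound supplied by Lemma~\ref{lem:mhomproj}).

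In more detail, I would begin by invoking Hahn--Banach together with the identification of $(W(\mathbb{T}^d))^\ast$ as a subspace of $H^2(\mathbb{T}^d)$ described in the introduction. This produces some $\psi_0 \in H^2(\mathbb{T}^d)$ with $\|\psi_0\|_{(W(\mathbb{T}^d))^\ast} = 1$ and $\langle \varphi, \psi_0 \rangle = \|\varphi\|_{W(\mathbb{T}^d)}$, after multiplication by a unimodular scalar to ensure the pairing is real and positive. The candidate is then $\psi := P_m \psi_0$. Since $\psi_0$ lies in $H^2(\mathbb{T}^d)$, its $m$-homogeneous part involves only the finitely many monomials $z^\kappa$ with $\kappa_j \geq 0$ and $\kappa_1 + \cdots + \kappa_d = m$, so $\psi$ is genuinely an $m$-homogeneous polynomial. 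Because $P_m$ is self-adjoint in the $H^2$ pairing and $\varphi$ is already $m$-homogeneous, I have $\langle \varphi, \psi \rangle = \langle P_m\varphi, \psi_0 \rangle = \langle \varphi, \psi_0 \rangle = \|\varphi\|_{W(\mathbb{T}^d)}$, which is real so also equals $\langle \psi, \varphi \rangle$.

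To conclude, Lemma~\ref{lem:mhomproj} gives the upper bound $\|\psi\|_{(W(\mathbb{T}^d))^\ast} \leq \|\psi_0\|_{(W(\mathbb{T}^d))^\ast} = 1$, while the elementary estimate $\|\psi\|_{(W(\mathbb{T}^d))^\ast} \geq |\langle \varphi, \psi \rangle|/\|\varphi\|_{W(\mathbb{T}^d)} = 1$ supplies the reverse inequality, yielding the claimed identity. I do not anticipate a serious obstacle: once Lemma~\ref{lem:mhomproj} is in hand the argument is essentially a two-step consequence of Hahn--Banach, with the only mild point being the observation that the $m$-homogeneous part of an $H^2(\mathbb{T}^d)$ function is automatically a polynomial.
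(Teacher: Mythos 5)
Your argument is correct and is essentially the same as the paper's: Hahn--Banach applied through the embedding of $(W(\mathbb{T}^d))^\ast$ into $H^2(\mathbb{T}^d)$ produces a norming element, which is then replaced by its image under $P_m$ using the self-adjointness of $P_m$, the identity $P_m\varphi=\varphi$, and the norm bound from Lemma~\ref{lem:mhomproj}. Your write-up merely spells out a few details the paper leaves implicit (the two-sided norm estimate and the observation that the $m$-homogeneous part of an $H^2(\mathbb{T}^d)$ function is a polynomial).
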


\begin{proof}
	Since $\varphi$ is nontrivial, it follows from the Hahn--Banach theorem and the fact that $(W(\mathbb{T}^d))^\ast$ is embedded in $H^2(\mathbb{T}^d)$ that there is $\psi$ in $H^2(\mathbb{T}^d)$ such that \eqref{eq:mhomdualnorm} holds. Since $P_m$ is self-adjoint in the pairing of $H^2(\mathbb{T}^d)$ and since $P_m \varphi = \varphi$, it follows from Lemma~\ref{lem:mhomproj} that \eqref{eq:mhomdualnorm} also holds if $\psi$ is replaced by $P_m \psi$.
\end{proof}

Let $\overline{H^2}(\mathbb{T}^d)$ be the closed subspace of $L^2(\mathbb{T}^d)$ consisting of the complex conjugates of functions in $H^2(\mathbb{T}^d)$ and let $\overline{P}$ denote the orthogonal projection from $L^2(\mathbb{T}^d)$ to $\overline{H^2}(\mathbb{T}^d)$. Suppose that $\psi$ be a function $H^2(\mathbb{T}^d)$. The formula
\[\mathbf{H}_\psi f = \overline{P}(\overline{\psi} f)\]
densely defines a Hankel operator $\mathbf{H}_\psi$ from $H^2(\mathbb{T}^d)$ to $\overline{H^2}(\mathbb{T}^d)$. In the present context, \cite[Theorem~2.5]{AHMR2021} asserts that $\mathbf{H}_\psi$ extends to a bounded linear operator if and only if $\psi$ is in $(W(\mathbb{T}^d))^\ast$ and that in this case $\|\mathbf{H}_\psi\| = \|\psi\|_{(W(\mathbb{T}^d))^\ast}$. If $\psi$ is in $(W(\mathbb{T}^d))^\ast$ and $f,g$ are in $H^2(\mathbb{T}^d)$, then 
\[\langle \mathbf{H}_\psi f, \overline{g} \rangle = \langle fg, \psi \rangle.\]
This formula makes it easy to compute the matrix of $\mathbf{H}_\psi$ with respect to the standard basis that $H^2(\mathbb{T}^d)$ and $\overline{H^2}(\mathbb{T}^d)$ inherit from $L^2(\mathbb{T}^d)$.
 
\begin{lemma} \label{lem:hankelnorm}
	If $\varphi_\alpha(z) = z_1^2 + \alpha z_1 z_2 + z_2^2$ for $\alpha\geq0$, then
	\[\|\varphi_\alpha\|_{(W(\mathbb{T}^2))^\ast} = \max\big(\sqrt{2+\alpha^2},1+\alpha\big).\] 
\end{lemma}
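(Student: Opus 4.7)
The point of departure is the identification $\|\varphi_\alpha\|_{(W(\mathbb{T}^2))^\ast} = \|\mathbf{H}_{\varphi_\alpha}\|$ recalled just above the lemma from \cite[Theorem~2.5]{AHMR2021}. The plan is to exploit the $2$-homogeneity of $\varphi_\alpha$ to reduce the norm computation of the Hankel operator to a finite-dimensional matrix problem.

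The key structural observation I would use is that the pairing $\langle \mathbf{H}_{\varphi_\alpha} f, \overline{g}\rangle = \langle fg, \varphi_\alpha\rangle$ vanishes whenever $f \in P_k H^2(\mathbb{T}^2)$ and $g \in P_j H^2(\mathbb{T}^2)$ with $k+j \neq 2$, since $\varphi_\alpha$ is itself $2$-homogeneous and distinct homogeneous subspaces of $L^2(\mathbb{T}^2)$ are orthogonal. Consequently $\mathbf{H}_{\varphi_\alpha}$ will split as an orthogonal direct sum of three finite-dimensional blocks
\[A_k \colon P_k H^2(\mathbb{T}^2) \to \overline{P_{2-k} H^2(\mathbb{T}^2)}, \qquad k = 0, 1, 2,\]
and vanish identically on $P_k H^2(\mathbb{T}^2)$ for $k \geq 3$. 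This reduces the computation to $\|\mathbf{H}_{\varphi_\alpha}\| = \max_{k \in \{0,1,2\}} \|A_k\|$.

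With this reduction in hand, I would compute the three block matrices in the standard monomial bases using the Hankel formula. The block $A_0$ sends $1$ to $\overline{z_1^2} + \alpha \overline{z_1 z_2} + \overline{z_2^2}$, so its operator norm will be $\sqrt{2+\alpha^2}$. The block $A_2$ is the transpose of $A_0$ (since $\langle A_2 g, \overline{f}\rangle = \langle A_0 f, \overline{g}\rangle$ by the symmetry of the formula) and therefore has the same norm. For $A_1$, a direct computation in the bases $\{z_1,z_2\}$ and $\{\overline{z_1},\overline{z_2}\}$ will yield the symmetric matrix $\bigl(\begin{smallmatrix} 1 & \alpha \\ \alpha & 1 \end{smallmatrix}\bigr)$, whose eigenvalues are $1 \pm \alpha$; since $\alpha \geq 0$, its norm equals $1+\alpha$. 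Taking the maximum of $\sqrt{2+\alpha^2}$ and $1+\alpha$ then gives the claimed formula.

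The only step that demands real thought is the orthogonal block decomposition induced by the bidegree structure; once that reduction is justified, the individual block norms are routine linear algebra.
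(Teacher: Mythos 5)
Your proposal is correct and takes essentially the same route as the paper: the paper writes out the full $6\times 6$ Hankel matrix in the standard monomial basis and observes the same block decomposition into a $1\times 3$, a $2\times 2$, and a $3\times 1$ block with norms $\sqrt{2+\alpha^2}$, $1+\alpha$, and $\sqrt{2+\alpha^2}$ respectively. Your justification of the block structure via the orthogonality of distinct homogeneous subspaces is in fact slightly more explicit than what the paper records.
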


\begin{proof}
	The matrix of the Hankel operator $\mathbf{H}_{\varphi_\alpha}$ with respect to the standard basis of $H^2(\mathbb{T}^2)$ and $\overline{H^2}(\mathbb{T}^2)$, with rows and columns containing all zeros omitted, is
	\[\begin{pmatrix}
		0 & 0 & 0 & 1 & \alpha & 1 \\
		0 & 1 & \alpha & 0 & 0 & 0 \\
		0 & \alpha & 1 & 0 & 0 & 0 \\
		1 & 0 & 0 & 0 & 0 & 0 \\
		\alpha & 0 & 0 & 0 & 0 & 0 \\
		1 & 0 & 0 & 0 & 0 & 0 \\
	\end{pmatrix}.\]
	Let $(\mathbf{e}_j)_{j=1}^6$ be the standard basis of $\mathbb{C}^6$. Due to orthogonality and the block structure of the matrix, it is sufficient to let it act on the subspaces $\spam\{\mathbf{e}_1\}$, $\spam\{\mathbf{e}_2,\mathbf{e}_3\}$, and $\spam\{\mathbf{e}_4,\mathbf{e}_5,\mathbf{e}_6\}$. The norms are, respectively, $\sqrt{2+\alpha^2}$, $1+\alpha$, and $\sqrt{2+\alpha^2}$.
\end{proof}

We mention in passing that the block structure of the matrix appearing in the proof of Lemma~\ref{lem:hankelnorm} is a special case of a general phenomenon that occurs for Hankel operators on $H^2(\mathbb{T}^d)$ with $m$-homogeneous symbols (see \cite[Theorem~4]{Brevig2022}).

Lemma~\ref{lem:hankelnorm} allows us to compute one of the two nontrivial quantities in the condition of Theorem~\ref{thm:hilbertpoints}~(d) for the polynomials $\varphi_\alpha$. It is also the crucial ingredient in the following result.

\begin{theorem} \label{thm:weaknorm}
	Suppose that $\varphi_\alpha(z) = z_1^2 + \alpha z_1 z_2 + z_2^2$. Then
	\[\|\varphi_\alpha\|_{W(\mathbb{T}^2)} = \begin{cases}
		\sqrt{2+\alpha^2}, & \text{if } 0 \leq \alpha \leq 1/2; \\
		\frac{4+\alpha}{3}, & \text{if } 1/2 < \alpha \leq 2; \\
		\alpha, & \text{if } \alpha>2.
	\end{cases}\]
\end{theorem}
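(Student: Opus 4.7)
The plan is to establish matching upper and lower bounds on $\|\varphi_\alpha\|_{W(\mathbb{T}^2)}$ in each of the three regimes. The upper bounds will come from explicit weak factorizations, while the lower bounds will follow from the duality inequality $\|\varphi\|_{W(\mathbb{T}^2)} \geq |\langle \varphi, \psi\rangle|/\|\psi\|_{(W(\mathbb{T}^2))^\ast}$ applied to a well-chosen $2$-homogeneous test polynomial $\psi$.

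The regime $0 \leq \alpha \leq 1/2$ is immediate. In this range $\sqrt{2+\alpha^2} \geq 1+\alpha$, so Lemma~\ref{lem:hankelnorm} gives $\|\varphi_\alpha\|_{(W(\mathbb{T}^2))^\ast} = \sqrt{2+\alpha^2} = \|\varphi_\alpha\|_{H^2(\mathbb{T}^2)}$; hence Theorem~\ref{thm:attain} shows that $\varphi_\alpha$ is norm attaining in $W(\mathbb{T}^2)$ and $\|\varphi_\alpha\|_{W(\mathbb{T}^2)} = \sqrt{2+\alpha^2}$.

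For $\alpha > 2$ I would use the weak factorization $\varphi_\alpha = (z_1+z_2)(z_1+z_2) + z_1 \cdot (\alpha-2)z_2$, whose weight is $2+(\alpha-2)=\alpha$. For the matching lower bound I would pair $\varphi_\alpha$ against $\psi = z_1 z_2$. Since $\widehat{\psi}$ is supported at a single multi-index, the matrix of $\mathbf{H}_{z_1 z_2}$ in the standard basis reduces, after discarding zero rows and columns, to a permutation of four basis vectors, so $\|z_1 z_2\|_{(W(\mathbb{T}^2))^\ast}=1$. Combined with $\langle \varphi_\alpha, z_1 z_2\rangle = \alpha$, this yields $\|\varphi_\alpha\|_{W(\mathbb{T}^2)} \geq \alpha$.

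The main obstacle is the middle regime $1/2 < \alpha \leq 2$, where neither of the preceding dual witnesses is sharp. To find a better $\psi$ I would exploit the $(z_1,z_2)$-symmetry and search among symmetric $2$-homogeneous polynomials $\psi = a(z_1^2+z_2^2) + b z_1 z_2$ with $a,b \geq 0$. The same block decomposition used to prove Lemma~\ref{lem:hankelnorm} applies verbatim and gives $\|\psi\|_{(W(\mathbb{T}^2))^\ast} = \max(\sqrt{2a^2+b^2},\,a+b)$, while $\langle \varphi_\alpha, \psi\rangle = 2a + \alpha b$. The ratio is maximized on the critical locus $a=2b$ where the two expressions in the maximum coincide; the normalized choice $\psi = 2z_1^2 + z_1 z_2 + 2z_2^2$ then has $\|\psi\|_{(W(\mathbb{T}^2))^\ast} = 3$ and $\langle \varphi_\alpha, \psi\rangle = 4+\alpha$, yielding $\|\varphi_\alpha\|_{W(\mathbb{T}^2)} \geq (4+\alpha)/3$. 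For the matching upper bound I would exhibit the convex identity
\[
\varphi_\alpha = \frac{2-\alpha}{3}\,(2z_1^2 + z_1 z_2 + 2z_2^2) + \frac{2\alpha-1}{3}\,(z_1+z_2)^2,
\]
whose coefficients are nonnegative precisely on $[1/2,2]$. Factoring the first summand as the product $\tfrac{2-\alpha}{3}(2z_1^2+z_1z_2+2z_2^2)\cdot 1$ contributes weight $2-\alpha$, and symmetrically splitting the second as $\sqrt{(2\alpha-1)/3}(z_1+z_2)\cdot\sqrt{(2\alpha-1)/3}(z_1+z_2)$ contributes weight $2(2\alpha-1)/3$; the two weights sum to $(4+\alpha)/3$, completing the proof.
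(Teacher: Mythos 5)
Your proof is correct, and every numerical ingredient checks out: the identity $\varphi_\alpha = \frac{2-\alpha}{3}(2z_1^2+z_1z_2+2z_2^2)+\frac{2\alpha-1}{3}(z_1+z_2)^2$ holds, its weight is $(2-\alpha)+\frac{2(2\alpha-1)}{3}=\frac{4+\alpha}{3}$, the dual witness $2z_1^2+z_1z_2+2z_2^2$ does have $(W(\mathbb{T}^2))^\ast$-norm $3$ by the same block decomposition as in Lemma~\ref{lem:hankelnorm}, and $\|z_1z_2\|_{(W(\mathbb{T}^2))^\ast}=1$ as you say. Your route is, however, genuinely different from the paper's. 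The paper never produces an explicit weak factorization inside the proof: it invokes Lemma~\ref{lem:mhomdualnorm} (Hahn--Banach plus the homogeneous projection of Lemma~\ref{lem:mhomproj}) to guarantee that the norm is \emph{attained} by a $2$-homogeneous dual element, symmetrizes to reduce to the one-parameter family $\psi_\beta = z_1^2+\beta z_1z_2+z_2^2$, and then reads off the value as $\sup_{\beta\ge0}F_\alpha(\beta)$; the explicit factorizations appear only afterwards as a remark. You instead sandwich the norm between an explicit lower bound (a dual certificate, which requires only the trivial inequality $\|\varphi\|_W\ge|\langle\varphi,\psi\rangle|/\|\psi\|_{(W)^\ast}$) and an explicit upper bound (a weak factorization), so you never need the attainment statement of Lemma~\ref{lem:mhomdualnorm} or the symmetrization argument at all --- and your heuristic remark that the ratio is maximized on the locus $a=2b$ is harmless, since the lower bound is valid for any test function. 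The trade-off is that the paper's method \emph{finds} the answer by optimization, whereas yours \emph{verifies} a guessed answer; your certificates are in fact exactly the extremizers the paper's optimization produces ($\beta=\alpha$, $\beta=1/2$, and the limit $\beta\to\infty$), and your factorization in the middle regime coincides with the one the paper lists after the theorem. Your first-regime argument via Theorem~\ref{thm:attain} is also the shortest possible and matches how the paper later handles part (iii) of Theorem~\ref{thm:casestudy}.
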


\begin{proof}
	By Lemma~\ref{lem:mhomdualnorm} there is a $2$-homogeneous polynomial $\psi(z) = az_1^2 + b z_1 z_2 + c z_2^2$ such that
	\[\|\varphi_\alpha\|_{W(\mathbb{T}^2)} = \frac{\langle \psi, \varphi_\alpha \rangle}{\|\psi\|_{(W(\mathbb{T}^2))^\ast}}.\]
	It follows from triangle inequality (for $(W(\mathbb{T}^2))^\ast$) that if this formula holds for $\psi_1$ and $\psi_2$, then it also holds for $\psi_1+\psi_2$. Since the coefficients of $\varphi_\alpha$ are real, it follows that $a$, $b$, and $c$ are real. Moreover, since $\varphi_\alpha(z_2,z_1)=\varphi_\alpha(z_1,z_2)$ we must have $a=c$. We consider first the case that $a=c\neq0$, where we normalize $\psi$ with $a=c=1$ and $b=\beta\geq0$. Using Lemma~\ref{lem:hankelnorm} we get that
	\[\|\varphi_\alpha\|_{W(\mathbb{T}^2)} = \sup_{\beta\geq 0} F_\alpha(\beta) \qquad \text{for} \qquad F_\alpha(\beta) = \begin{cases}
		\frac{2+\alpha \beta}{\sqrt{2+\beta^2}}, & \text{if } 0 \leq \beta \leq 1/2; \\
		\frac{2+\alpha \beta}{1+\beta}, & \text{if } \beta>1/2.
	\end{cases}\]
	There are three cases to consider:
	\begin{enumerate}
		\item[\normalfont (i)] If $0 \leq \alpha \leq 1/2$, then $F_\alpha$ is increasing to $\beta=\alpha$ and then decreasing.
		\item[\normalfont (ii)] If $1/2 < \alpha \leq 2$, then $F_\alpha$ is increasing to $\beta=1/2$ and then decreasing.
		\item[\normalfont (iii)] If $\alpha>2$, then $F_\alpha$ is increasing.
	\end{enumerate}
	Note that to attain (iii) we have to let $\beta \to \infty$. This is equivalent to the case $a=c=0$ that we excluded above. The proof is completed by computing 
	\[F_\alpha(\alpha) = \sqrt{2+\alpha^2},\qquad F_\alpha(1/2) = \frac{4+\alpha}{3},\qquad  \text{and that}\qquad F_\alpha(\beta) \to \alpha\]
	as $\beta \to \infty$.
\end{proof}

The knowledge of $\|\varphi_\alpha\|_W$ from Theorem~\ref{thm:weaknorm} makes it possible to guess an optimal weak factorization \eqref{eq:weakfactorization} of $\varphi_\alpha$ in the three cases.

\begin{enumerate}
	\item[\normalfont (i)] If $0 \leq \alpha \leq 1/2$, then an optimal weak factorization is $\varphi_\alpha = \varphi_\alpha \cdot 1$.
	\item[\normalfont (ii)] If $1/2<\alpha<2$, then an optimal weak factorization is
	\[\varphi_\alpha(z) = \frac{2}{3}(\alpha-1/2) (z_1+z_2)(z_1+z_2) + \frac{2}{3}(2-\alpha) \left(z_1^2+\frac{z_1z_2}{2}+z_2^2\right) \cdot 1.\]
	\item[\normalfont (iii)] If $\alpha>2$, then an optimal weak factorization is
	\[\varphi_\alpha(z) = \left(z_1 + \frac{\alpha+\sqrt{\alpha^2-4}}{2} z_2\right)\left(z_1 + \frac{\alpha-\sqrt{\alpha^2-4}}{2} z_2\right).\]
\end{enumerate}

We conclude the paper by wrapping up the proof of Theorem~\ref{thm:casestudy}.	

\begin{proof}[Proof of Theorem~\ref{thm:casestudy}]
	The statement (i) is trivial since $\varphi_\alpha$ has constant modulus on $\mathbb{T}^2$ for no $\alpha$ and---as noted above---the statement (ii) is the same as Theorem~\ref{thm:H1casestudy}. It is plain that $\|\varphi_\alpha\|_{H^2(\mathbb{T}^2)} = \sqrt{2+\alpha^2}$. To settle (iii) and (iv) we use, respectively, Theorem~\ref{thm:attain} and Theorem~\ref{thm:hilbertpoints} that require us to solve the equations
	\[\|\varphi_\alpha\|_{(W(\mathbb{T}^2))^\ast} = \sqrt{2+\alpha^2} \qquad \text{and} \qquad \|\varphi_\alpha\|_{W(\mathbb{T}^2)} \|\varphi_\alpha\|_{(W(\mathbb{T}^2))^\ast} = 2+\alpha^2.\]
	We use Lemma~\ref{lem:hankelnorm} to see that the first equation holds if and only if $0\leq \alpha \leq 1/2$. We then use both Lemma~\ref{lem:hankelnorm} and Theorem~\ref{thm:weaknorm} to see that the second equation holds if and only if $0\leq \alpha \leq 1/2$ or $\alpha=2$.
\end{proof}

\bibliographystyle{amsplain} 
\bibliography{nahp}

\end{document}